\newtheorem{thm}{Theorem}
\newtheorem*{thm*}{Theorem}
\newtheorem{lem}[thm]{Lemma}
\newtheorem*{prop*}{Proposition}
\newtheorem{cor}[thm]{Corollary}
\theoremstyle{definition}
\newtheorem{remark}[thm]{Remark}
\def\bb{\mathbb}
\def\vp{\varphi}
\def\bb{\mathbb}
\def\cc{\mathcal}
\newcommand\ip[2]{\left\langle #1\, , #2 \right\rangle}
\begin{document}

\title[A short proof of the existence of the injective envelope]{A short proof of the existence of the injective envelope of an operator space}

\author[T. Sinclair]{Thomas Sinclair}
\address{Department of Mathematics, Purdue University, 150 N University St, West Lafayette, IN 47907-2067, USA}
\email{tsincla@purdue.edu}

\subjclass[2010]{46L07, 46L55, 22A20}

\dedicatory{}
\keywords{topological semigroup, operator space, operator system, injective envelope}

\begin{abstract} We use Ellis' lemma to give a simple proof of the existence of the injective envelope of an operator space first shown by work of Hamana and Ruan.

\end{abstract}

\maketitle


Ramsey-theoretic methods provide powerful tools in functional analysis, ergodic theory, and additive combinatorics: see \cite{at, dgl, hs} for many of these applications. The goal of this note is to give a short, elementary proof of the existence of the injective envelope of an operator space via Ellis' lemma on right topological semigroups. The theory of injective envelopes for the categories of C$^\ast$-algebras and operator systems and their associated dynamical systems is developed in the seminal works of Hamana \cite{ha, ha1, ha2, ha3}, while Ruan first showed the existence of injective envelopes for operator spaces \cite{ru}.  We mention that the treatment of injective envelopes as in \cite{pa1, pa, pa3} is very close in spirit with our proof. In fact, in \cite{pa3} the injective envelope of certain crossed products of a countable, discrete $G$ is related with idempotents in the Stone--Cech compactification $\beta G$,  but it seems a formal connection with Ellis' lemma in the general context was never made.

Let $S$ be a nonempty Hausdorff topological space equipped with a semigroup operation so that $S\ni s\mapsto st$ is continuous for each $t\in S$ separately. We will say that $S$ is a \emph{right topological semigroup}. Let $\cc I(S)$ be the (possibly empty) set of idempotent elements of $S$ which we equip with the natural partial order $e\preceq f$ if $ef = fe = e$. A \emph{minimal idempotent} is an idempotent which is a minimal element of the poset $(\cc I(S), \preceq)$. Additionally we say that two idempotents $e,f\in \cc I(S)$ are \emph{similar}, $e\sim f$, if $e = fe$ and $f = ef$. We note that similarity is an equivalence relation on $\cc I(S)$.

We now state Ellis' lemma \cite{el}, a fundamental and far-reaching result on the structure of compact right topological semigroups.

\begin{lem}
    Every compact right topological semigroup contains an idempotent.
\end{lem}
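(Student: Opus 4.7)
The plan is the classical Zorn-type argument for Ellis' lemma. Let $\cc F$ denote the collection of all nonempty closed subsemigroups of $S$, partially ordered by reverse inclusion. Since $S \in \cc F$, the collection is nonempty. For any chain in $\cc F$, the intersection is nonempty by compactness (finite intersection property, using that each member is closed and nonempty in the compact space $S$) and is clearly still a closed subsemigroup. Hence by Zorn's lemma there is a minimal element $T \in \cc F$, i.e.\ a nonempty closed subsemigroup of $S$ containing no proper nonempty closed subsemigroup.

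Next I would fix any $t \in T$ and consider $Tt = \{st : s \in T\}$. Because right multiplication by $t$ is continuous and $T$ is compact, $Tt$ is compact and therefore closed in $S$; it is a subsemigroup since $(s_1 t)(s_2 t) = (s_1 t s_2)t \in Tt$, using $s_1 t s_2 \in T$. Moreover $Tt \subseteq T$ because $T$ is a subsemigroup. Minimality of $T$ then forces $Tt = T$, so in particular $t \in Tt$, meaning the set
\[
E := \{s \in T : st = t\}
\]
is nonempty.

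Finally, I would check that $E$ is itself a closed subsemigroup contained in $T$: it is closed as the preimage of the singleton $\{t\}$ under the continuous map $s \mapsto st$, and if $s_1, s_2 \in E$ then $(s_1 s_2) t = s_1 (s_2 t) = s_1 t = t$, so $s_1 s_2 \in E$. Thus $E \in \cc F$ with $E \subseteq T$, and minimality of $T$ yields $E = T$. Since $t \in T = E$, we conclude $t \cdot t = t$, exhibiting an idempotent in $S$.

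The only real subtlety is keeping track of which multiplications require separate continuity: the construction is arranged so that the only continuity invoked is continuity of right multiplication by a \emph{fixed} element $t$, which is precisely the hypothesis on a right topological semigroup. The rest of the proof is pure order-theoretic bookkeeping via Zorn's lemma and elementary compactness.
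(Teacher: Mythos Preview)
Your argument is correct and is precisely the classical Ellis--Numakura proof. Note, however, that the paper does not supply its own proof of this lemma: it merely states the result and points to \cite{el}, \cite[Theorem 1.23]{dgl}, \cite[Theorem 1.1]{fk}, and \cite[Theorem 2.5]{hs} for proofs, and the argument you have written is essentially the one found in those references.
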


In addition to Ellis' paper cited above, proofs can be found, for instance, as \cite[Theorem 1.23]{dgl}, \cite[Theorem 1.1]{fk}, or \cite[Theorem 2.5]{hs}.

The following consequence of Ellis' lemma essentially appears, for instance, as Theorems 1.2-1.4 in \cite{fk} or as a combination of Theorem 1.60, Theorem 2.5, and Corollary 2.6 in \cite{hs}.

\begin{lem}\label{ellis} If $S$ is a compact right topological semigroup, then for every idempotent $e$ there is a minimal idempotent $f\preceq e$.
\end{lem}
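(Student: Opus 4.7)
My plan is to apply Ellis' lemma to a carefully chosen compact right topological subsemigroup of $S$ and then symmetrize the resulting idempotent. A first attempt would be to use $eSe$, but this need not be closed since left multiplication by $e$ is not assumed continuous; the main obstacle is precisely this asymmetry, and I would route around it by working with $Se$, which is closed---being the continuous image of compact $S$ under $s\mapsto se$---and is a compact left ideal of $S$ containing $e$.

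Inside the poset $\cc F$ of nonempty closed left ideals of $S$ contained in $Se$, ordered by inclusion, every chain has a lower bound: the intersection is closed, remains a left ideal ($S\cdot\bigcap L_\alpha\subseteq\bigcap SL_\alpha\subseteq\bigcap L_\alpha$), and is nonempty by the finite intersection property applied to compact $S$. Zorn's lemma thus yields a $\subseteq$-minimal element $M\in\cc F$, which is itself a compact right topological semigroup since $M\cdot M\subseteq S\cdot M\subseteq M$. By Ellis' lemma, $M$ contains an idempotent $f$, and because $f\in Se$ we have $fe=f$. Setting $g:=ef$, the identities $g^{2}=e(fe)f=ef=g$, $ge=e(fe)=ef=g$, and $eg=ef=g$ show that $g$ is an idempotent with $g\preceq e$.

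To conclude, I would verify that $g$ is minimal. Suppose $h\in\cc I(S)$ satisfies $h\preceq g$, so $h=gh=hg$. Then $h=hg\in Sg\subseteq S\cdot M\subseteq M$, and $Sh$ is a nonempty closed left ideal of $S$ contained in $M$ (closed because it is the continuous image of compact $S$ under $s\mapsto sh$). The minimality of $M$ forces $Sh=M$, so $g\in Sh$; writing $g=sh$ yields $gh=shh=sh=g$, while $h\preceq g$ gives $gh=h$. Therefore $g=h$, as required. The delicate point throughout is balancing the one-sided continuity assumption---closedness of principal left ideals $Sh$ is available for free, but their right-hand analogues are not---so both the Zorn step and the passage from $fe=f$ to $g\preceq e$ are designed to use only right continuity.
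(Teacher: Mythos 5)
Your proof is correct and follows essentially the same route as the paper: pass to the closed left ideal $Se$, use Zorn's lemma to extract a minimal closed left ideal, obtain an idempotent $f$ there by Ellis' lemma, and show that $ef$ is an idempotent below $e$ which is minimal. The only cosmetic difference is that you verify minimality of $g=ef$ directly via the identity $Sh=M$, whereas the paper first shows every idempotent of the minimal left ideal is minimal and then observes that $ef$ lies in that ideal.
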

We offer a streamlined proof for the convenience of the reader.

\begin{proof}
    Let $e\in S$ be an idempotent, and consider the left ideal $Se$, which is closed by continuity of right multiplication. By compactness of $Se$ and Zorn's lemma, there is $J\subseteq Se$ a minimal closed left ideal. Noting that $J$ itself is a compact right topological semigroup, $J$ contains an idempotent $f$. 
    
    By minimality $J = Sx$ for all $x\in J$, thus $g = yh = yh^2 = gh$ for all $g,h\in J$ idempotent. If $g\in \cc I(S)$ and $g\preceq f$, then $g = gf$ implies $g\in J$. It  follows that $g = fg = f$, so $f$ is minimal.
    Since $f\in Se$, $f = fe$, and we have $(ef)^2 = efef = ef^2 =ef$ is an idempotent in $J$, hence minimal. As $ef = eef = efe$, $ef\preceq e$.
\end{proof}

\begin{remark}\label{remark}
    If $S$ is a semigroup, $e,f\in\cc I(S)$, and $fe =e$, then $ef\preceq f$. Consequently, if $f$ is minimal, then $fe =e$ implies that $e\sim f$.
\end{remark}

Let $S$ be a convex subset of a locally convex topological vector space $V$ which is equipped with a semigroup structure that is affine and continuous with respect to right multiplication. Following \cite{berglund}, we will refer to $S$ as a \emph{affine right topological semigroup}. 
 
The following result and its proof were communicated to the author by Matthew Kennedy. The result appears in essentially this form as \cite[Theorem II.4.3]{berglund}.

\begin{lem}\label{kennedy}
    Let $S$ be a compact affine right topological semigroup, and let $J$ be a minimal closed left ideal. We have that $J$ is a left zero semigroup, that is, $xy = x$ for all $x,y\in J$. In particular, every element of a minimal closed left ideal of $S$ is idempotent.
\end{lem}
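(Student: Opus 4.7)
The plan is to apply the Schauder--Tychonoff fixed-point theorem to the continuous affine map $R_y : S \to S$, $x \mapsto xy$, for each $y \in J$, and then exploit minimality of $J$ both to locate the resulting fixed point inside $J$ and to propagate the fixed-point equation to all of $J$.

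Fix $y\in J$. Since $R_y$ is a continuous affine self-map of the compact convex set $S$, Schauder--Tychonoff (or, equivalently, a Ces\`aro-averaging argument using affinity of $R_y$ and boundedness of $S$) produces a point $x_0 \in S$ with $x_0 y = x_0$. The set $Sy$ is closed as the continuous image of the compact $S$, is a left ideal since $s'(sy) = (s's)y$, and is contained in $J$ because $J$ is a left ideal containing $y$; hence $Sy = J$ by minimality, and so $x_0 = x_0 y \in Sy = J$.

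Now let $K_y := \{x\in S : xy = x\}$. This is closed by continuity of $R_y$ and a left ideal of $S$, since $xy = x$ implies $(sx)y = s(xy) = sx$. Consequently $K_y \cap J$ is a closed left ideal of $S$ contained in $J$ and nonempty, as it contains $x_0$, so minimality of $J$ forces $K_y \cap J = J$. This gives $xy = x$ for every $x \in J$, and since $y\in J$ was arbitrary, $xy = x$ for all $x,y\in J$; thus $J$ is a left-zero semigroup and, in particular, every element of $J$ is idempotent.

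The principal obstacle is producing a fixed point of $R_y$ inside $J$: a minimal closed left ideal need not itself be convex, so Schauder--Tychonoff cannot be invoked on $J$ directly. The resolution is to work on the ambient convex set $S$ and then invoke the identity $Sy = J$ coming from minimality, which both delivers the fixed point into $J$ and, combined with closedness and the left-ideal property of $K_y$, spreads the relation $xy = x$ to every $x\in J$.
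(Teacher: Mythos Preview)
Your argument is correct, but the obstacle you identify is illusory. You write that ``a minimal closed left ideal need not itself be convex, so Schauder--Tychonoff cannot be invoked on $J$ directly.'' In fact $J$ \emph{is} convex here: you yourself show that $J = Sy$ for any $y\in J$, and $Sy$ is the image of the convex set $S$ under the affine map $R_y$, hence convex. The paper's proof exploits exactly this: it observes that $J = Sy$ is compact and convex, applies the Markov--Kakutani fixed point theorem to $R_y\vert_J : J \to J$ to obtain $x\in J$ with $xy = x$, and then uses minimality (the fixed-point set in $J$ is a nonempty closed left ideal) to conclude $xy = x$ for all $x\in J$. Your detour through the ambient $S$ works fine, but recognizing that $J$ is already convex makes the relocation step $x_0 = x_0 y \in Sy = J$ unnecessary and shortens the proof.
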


\begin{proof}
    We have that $J = Sy$ for any $y\in J$, thus $J$ is compact and convex. We have that $x\mapsto xy$ is a continuous affine map from $J$ to itself, hence by the Markov--Kakutani fixed point theorem \cite[Theorem V.10.1]{conway} there is $x\in J$ so that $xy = x$. The set of all $x\in J$ so that $xy = x$ is thus nonempty, closed, and a left ideal, hence is equal to $J$ by minimality. 
\end{proof}

\begin{remark}
    A related result appears in work of Marrakchi \cite[Theorem 3.6]{marrakchi}, inspired by an earlier version of this manuscript. It states that if $S$ is a compact affine right topological semigroup and $e\in S$ is a minimal idempotent, then $exe = e$ for all $x\in S$. We now show that this result is equivalent to Lemma \ref{kennedy}.
    
    Since we have seen in the proof of Lemma \ref{ellis} that every minimal closed left ideal in a right topological semigroup is of the form $Sf$ for some minimal idempotent, Marrakchi's result shows that $xy = xfyf = xf = x$ for all $x,y\in Sf$.  Conversely, if $f$ is a minimal idempotent, then $Sf$ is a minimal closed left ideal. Indeed, if $J\subseteq Sf$ is a minimal closed left ideal, then for any (minimal) idempotent $g\in J$, $g = gf$, which shows that $fg$ is idempotent with $fg\preceq f$. Hence $fg = f$ by minimality, and it follows that $J\supseteq Sg = Sf$. By Lemma \ref{kennedy}, $fxf = f(xf) =f$.
\end{remark}

Let $X = Y^*$ be a dual Banach space. 
Denote $\cc B(X)$ to be the algebra of a bounded linear operators on $X$ and $\cc C(X)$ to be the affine subsemigroup of contractions.

\begin{lem}
    We have that $\cc C(X)$ is a compact affine right topological semigroup under operator composition and convergence in the pointwise-weak* topology.
\end{lem}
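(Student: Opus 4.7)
The plan is to realize $\cc C(X)$ as a closed subspace of a Tychonoff product of weak*-compact balls, and then verify the semigroup axioms by routine continuity computations. To set up the compactness argument, I would identify the pointwise-weak* topology as the subspace topology inherited from the embedding $\cc B(X)\hookrightarrow\prod_{x\in X} X$, $T\mapsto (Tx)_{x\in X}$, where each factor carries the weak* topology. Every contraction satisfies $Tx\in B_{\|x\|}:=\{z\in X:\|z\|\le\|x\|\}$. Since $X=Y^*$ is a dual Banach space, Banach--Alaoglu yields that each $B_{\|x\|}$ is weak*-compact, and Tychonoff's theorem shows that $\prod_{x\in X} B_{\|x\|}$ is compact. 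Thus $\cc C(X)$ sits inside a compact Hausdorff set, and it remains to verify that it is closed.

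For the closedness step, I would let a net $(T_\alpha)\subseteq\cc C(X)$ converge pointwise-weak* to some $T$ in the product. Linearity of $T$ is immediate: for scalars $a,b$ and $x,y\in X$, the identity $T_\alpha(ax+by)=aT_\alpha(x)+bT_\alpha(y)$ passes to the weak* limit by continuity of addition and scalar multiplication in the weak* topology on $X$. For the contraction condition I would invoke weak*-lower semicontinuity of the norm on $X=Y^*$ (it is the supremum of the weak*-continuous seminorms $|\langle\cdot,y\rangle|$ as $y$ ranges over the closed unit ball of $Y$), which yields $\|Tx\|\le\liminf_\alpha\|T_\alpha x\|\le\|x\|$. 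Hence $T\in\cc C(X)$, and compactness follows.

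Finally, the semigroup structure is essentially formal. Composition of contractions is a contraction and convex combinations of contractions are contractions by the triangle inequality, so $\cc C(X)$ is a convex subsemigroup of the locally convex space $\cc B(X)$, the latter equipped with the pointwise-weak* topology whose defining seminorms are $T\mapsto|\langle Tx,y\rangle|$ for $x\in X$, $y\in Y$. For fixed $T\in\cc C(X)$, the map $S\mapsto ST$ is affine by bilinearity of composition, and continuous since $S_\alpha\to S$ pointwise-weak* directly gives $(S_\alpha T)x=S_\alpha(Tx)\to S(Tx)=(ST)x$ weak* for every $x\in X$. The only nontrivial technical input is the weak*-lower semicontinuity of the norm used in the closedness step; no serious obstacle arises beyond this.
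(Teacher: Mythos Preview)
Your argument is correct. The verification of the affine and right-topological semigroup axioms is identical to the paper's. The difference lies in the compactness argument: you embed $\cc C(X)$ into the Tychonoff product $\prod_{x\in X} B_{\|x\|}$, invoke Banach--Alaoglu on each factor, and then check by hand that pointwise-weak* limits of contractions remain linear and contractive. The paper instead identifies $\cc B(X)$ isometrically with the dual of the projective tensor product $X\otimes_\pi Y$ via $\langle x\otimes y,\vp\rangle\leftrightarrow\langle y,T_\vp(x)\rangle$, observes that under this identification the weak* topology on $\cc B(X)$ is precisely the pointwise-weak* topology, and applies Banach--Alaoglu once to the unit ball of $(X\otimes_\pi Y)^*$. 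Your route is more elementary and self-contained, requiring no tensor-product machinery, at the cost of the explicit closedness check; the paper's route is slicker and packages linearity and the norm bound into the duality, but imports an external reference for the tensor-product identification.
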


\begin{proof}
    We have that $f\mapsto f\circ g$ is clearly affine in $f$ for all $f,g\in\cc C(X)$ and further that if $f_n\to f$ in the pointwise-weak* topology, then $f_n\circ g\to f\circ g$ in the pointwise-weak* topology.

    Since $X = Y^*$ is a dual Banach space, $\cc B(X)$ is isometrically isomorphic to the dual of the projective tensor product $X\otimes_\pi Y$ via the correspondence $\ip{x\otimes y}{\vp} \leftrightarrow \ip{y}{T_\vp(x)}$, \cite[section 2.2]{ryan}. In this way the weak* topology on $\cc B(X)$ can be seen to coincide with the topology of pointwise-weak* convergence on functions from $X$ to itself. The Banach--Alaoglu theorem then applies, establishing compactness. \qedhere
\end{proof}

We now give the main applications to the theory of operator spaces. In the following, $H$ will denote a Hilbert space, and $\cc B(H)$ will be the algebra of all bounded linear operators on $H$. We recall that $\cc B(H)$ is a dual Banach space. It is then straightforward to check that $\cc{CC}(\cc B(H))$, the set of all completely contractive linear maps from $\cc B(H)$ to itself, it is a closed convex subsemigroup of $\cc C(\cc B(H))$ in the pointwise-weak* topology. Thus, $\cc{CC}(\cc B(H))$ is a compact affine right topological semigroup in its own right.

Let $E\subset \cc B(H)$ be an operator space. We say that $E$ is \emph{injective} in the category of operator spaces if for all inclusions $A\subset B$ of operator spaces and all completely contractive maps $\vp: A\to E$, there is a completely contractive extension of $\vp': B\to E$. Since $\cc B(H)$ is injective for any Hilbert space by Wittstock's extension theorem \cite[Theorem 8.2]{pa}, $E\subseteq \cc B(H)$ is injective if and only if there is a completely contractive idempotent $\vp: \cc B(H)\to \cc B(H)$ with $E = \vp(\cc B(H))$.

Following \cite[Chapter 15]{pa}, we will say that a pair $(F, \kappa)$ consisting of an operator space $F$ and a completely isometric embedding $\kappa: E\to F$ is an \emph{injective envelope} for $E$ if $F$ is an injective object in the category of operator spaces and for all injective operator spaces $\kappa(E)\subseteq F_0 \subseteq F$, $F_0 = F$.

\begin{thm}\label{thm:main} Any operator space has an injective envelope.
\end{thm}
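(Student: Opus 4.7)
The plan is to realize the injective envelope as $\varphi(\cc B(H))$ for a carefully chosen minimal idempotent $\varphi$ in a semigroup of completely contractive endomorphisms of $\cc B(H)$ that fix $E$ pointwise.

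Fix a completely isometric embedding $E\subseteq \cc B(H)$ and set
\[
    S := \{\varphi \in \cc{CC}(\cc B(H)) : \varphi|_E = \id_E\}.
\]
This is a closed convex subsemigroup of $\cc{CC}(\cc B(H))$ containing $\id$, hence a compact affine right topological semigroup in its own right. A Zorn argument on closed left ideals (as in the proof of Lemma~\ref{ellis}) produces a minimal closed left ideal $J\subseteq S$, and by Lemma~\ref{kennedy} every element of $J$ is idempotent. Next I will check that any such $\varphi\in J$ is in fact a minimal idempotent of $S$: if $\psi\in\cc I(S)$ with $\psi\preceq\varphi$, then $\psi=\psi\varphi\in S\varphi\subseteq J$, so the left-zero law in $J$ gives $\varphi\psi=\varphi$, while $\psi\preceq\varphi$ gives $\varphi\psi=\psi$; hence $\psi=\varphi$.

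Now set $F:=\varphi(\cc B(H))$. Since $\varphi$ is a completely contractive idempotent, $F$ is injective, and $E\subseteq F$ completely isometrically because $\varphi$ fixes $E$. For the envelope (minimality) property, suppose $E\subseteq F_0\subseteq F$ with $F_0$ injective. By injectivity, extend $\id_{F_0}$ to a completely contractive map $\pi\colon \cc B(H)\to F_0\subseteq \cc B(H)$; then $\pi$ is an idempotent in $\cc{CC}(\cc B(H))$ lying in $S$ (since $E\subseteq F_0$). Form $\psi:=\pi\circ\varphi$. The image of $\psi$ lies in $F_0\subseteq F$; since $\varphi$ acts as the identity on $F$ and $\pi$ acts as the identity on $F_0$, one reads off that $\psi^2=\psi$, $\psi\in S$, and moreover $\psi\varphi=\psi$ and $\varphi\psi=\psi$, so $\psi\preceq\varphi$. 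Minimality of $\varphi$ then forces $\psi=\varphi$, giving $F_0=\psi(\cc B(H))=F$.

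The only delicate point is choosing the correct composition order for $\psi$: taking $\psi=\pi\circ\varphi$ (rather than $\varphi\circ\pi$) is exactly what ensures the image lands in $F_0$ and makes both the idempotency of $\psi$ and the order relation $\psi\preceq\varphi$ fall out immediately from the idempotency of $\varphi$ and $\pi$. Everything else is bookkeeping on top of Lemmas~\ref{ellis} and~\ref{kennedy}.
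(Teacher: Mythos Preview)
Your proof is correct and follows essentially the same approach as the paper: both pick a minimal idempotent $\varphi$ in the semigroup of completely contractive self-maps of $\cc B(H)$ fixing $E$, set $F=\varphi(\cc B(H))$, and use minimality to rule out any strictly smaller injective $F_0$. The only differences are cosmetic: the paper gets the minimal idempotent directly from Lemma~\ref{ellis} (your detour through Lemma~\ref{kennedy} is not needed), and for the envelope step the paper applies Remark~\ref{remark} to the witnessing idempotent $\pi$ itself rather than first forming $\psi=\pi\varphi$.
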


\begin{proof} Let $E\subset \cc B(H)$ be an operator space. Let $\cc S$ be the set of all $\vp\in \cc{CC}(\cc B(H))$ so that $\vp(x) = x$ for all $x\in E$. It is easy to check that $\cc S$ is a closed convex subsemigroup  of $\cc{CC}(\cc B(H))$ in the topology of pointwise-weak* convergence. Therefore, by Lemma \ref{ellis} there is a minimal idempotent $\varphi\in \cc S$. Let $F= \vp(\cc B(H))$. Suppose that $E \subseteq F_0\subseteq F$ is injective, as witnessed by $\psi\in \cc I(\cc S)$. We have $\vp\circ\psi = \psi$, hence $\vp\sim \psi$ by Remark \ref{remark}. This implies that $\vp$ and $\psi$ have the same range as linear operators, thus $F_0 = F$. \qedhere
\end{proof}

\begin{remark}
    This theorem effectively characterizes the injective envelopes of $E\subseteq \cc B(H)$ as the images of minimal idempotents in the semigroup of completely contractive self-maps of $\cc B(H)$ which pointwise fix $E$.
\end{remark}

The next result was brought to the author's attention by Matthew Kennedy, as a consequence of Lemma \ref{kennedy}.

\begin{thm}
    Let $E$ be an operator space, and let $E\subseteq F$ be an injective envelope. Then $E\subseteq F$ is \emph{rigid}, that is, the only completely contractive map $\theta: F\to F$ with $\theta(x)=x$ for all $x\in E$ is the identity map.
\end{thm}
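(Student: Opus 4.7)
The plan is to deduce rigidity from the Marrakchi identity recorded in the remark after Lemma~\ref{kennedy}: for a minimal idempotent $\varphi$ in a compact affine right topological semigroup $S$, one has $\varphi x \varphi = \varphi$ for every $x \in S$. The first step is to realize the given injective envelope $F$ as the image of a \emph{minimal} idempotent of the semigroup $\cc S$ used in the proof of Theorem~\ref{thm:main}. Embed $F$ completely isometrically into some $\cc B(H)$, so that $E \subseteq F \subseteq \cc B(H)$, and let $\cc S$ be the compact affine right topological semigroup of completely contractive self-maps of $\cc B(H)$ that fix $E$ pointwise. Since $F$ is injective, the identity on $F$ extends to a completely contractive idempotent $\Phi \colon \cc B(H) \to \cc B(H)$ with image $F$, and $\Phi \in \cc I(\cc S)$. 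Invoking Lemma~\ref{ellis} I get a minimal idempotent $\varphi \preceq \Phi$ in $\cc S$. The relation $\Phi\varphi = \varphi$ shows $\varphi(\cc B(H)) \subseteq F$, while $\varphi(\cc B(H))$ is itself an injective operator space containing $E$, so by the defining property of the injective envelope it must equal $F$.

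Given $\theta \colon F \to F$ completely contractive with $\theta|_E = \id_E$, I would extend $\theta$ to a completely contractive $\tilde\theta \colon \cc B(H) \to \cc B(H)$ by Wittstock injectivity of $\cc B(H)$, applied to the map $F \to F \hookrightarrow \cc B(H)$. Since $\tilde\theta$ still fixes $E$ pointwise, $\tilde\theta \in \cc S$. The Marrakchi identity applied to $\tilde\theta$ and $\varphi$ gives $\varphi \circ \tilde\theta \circ \varphi = \varphi$ as operators on $\cc B(H)$. For any $y \in F$ we have $\varphi(y) = y$ (as $y$ lies in the image of the idempotent $\varphi$) and $\tilde\theta(y) = \theta(y) \in F$, hence $\varphi(\theta(y)) = \theta(y)$. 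Evaluating the identity at $y$ therefore yields $\theta(y) = \varphi(y) = y$, so $\theta = \id_F$.

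The main obstacle, I expect, is precisely the first step: an arbitrary injective envelope must be promoted to the image of a \emph{minimal} idempotent of $\cc S$, not just any idempotent. This is what forces the use of the minimality clause in the definition of the injective envelope (no smaller injective subspace containing $E$), together with Lemma~\ref{ellis} to descend to a minimal idempotent below $\Phi$. Once that realization is in place, the proof is a direct application of the Marrakchi/Kennedy identity already established in the paper, combined with one further use of Wittstock's extension property at the level of $\theta$.
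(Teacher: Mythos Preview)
Your proposal is correct and follows essentially the same route as the paper: realize $F$ as the image of a minimal idempotent $\varphi$ in the semigroup $\cc S$, extend $\theta$ via Wittstock, and apply the identity $\varphi\tilde\theta\varphi=\varphi$ (the paper phrases this via Lemma~\ref{kennedy} on the minimal left ideal $\cc S\varphi$, which is equivalent to the Marrakchi form you cite). Your write-up is in fact more careful than the paper's on the point you flagged as the ``main obstacle'': the paper simply asserts that $F=\varphi(\cc B(H))$ for some minimal idempotent, while you supply the argument (pass from the idempotent $\Phi$ witnessing injectivity of $F$ to a minimal $\varphi\preceq\Phi$ via Lemma~\ref{ellis}, then use the envelope's minimality to force $\varphi(\cc B(H))=F$).
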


\begin{proof}
    Let $F\subseteq \cc B(H)$. Let $\cc S$ be the compact affine right topological semigroup of all completely contractive self-maps of $\cc B(H)$ that pointwise fix $E$. We have that $F = \vp(\cc B(H))$ for some minimal idempotent $\vp\in \cc S$. By Wittstock's extension theorem, $\theta$ extends to a completely contractive map $\theta'\in \cc S$. Since $\theta\vp = \theta'\vp$ is in the minimal right ideal of $\cc S$ generated by $\vp$, we have that $\vp\theta\vp = \vp^2 = \vp$ by Lemma \ref{kennedy}, thus $\theta$ must fix $F$ pointwise.
\end{proof}

We discuss a slight modification which seems to have connections with the theory of noncommutative Poisson boundaries: \cite{arveson, dp, izumi}. Let $\cc M$ be a von Neumann algebra. For $\vp\in \cc{CC}(\cc M)$ we let $F_\vp := \{x\in\cc M: \vp(x) = x\}$ denote the norm-closed subspace of fixed points of $\vp$. Suppose $\vp$ is continuous in the relative weak* topology (that is, the ultraweak topology) as a map of the unit ball of $\cc B(H)$ to itself (that is, $\vp$ is normal) and that $E\subseteq F_\vp$ is a norm-closed subspace. We consider 
\[\cc T_{E,\vp} := \{\theta\in\cc{CC}(\cc M) : \vp\theta = \theta,\ E\subseteq F_\theta\}.\] 
It is straightforward to check that $\cc T_{E,\vp}$ is an affine subsemigroup of $\cc{CC}(\cc M)$ which is closed in the pointwise-ultraweak topology, with closure being where the normality of $\vp$ is necessary. Moreover, $\cc T_{E,\vp}$ is nonempty as any pointwise-ultraweak cluster point of the sequence 
\[\tau_N := \frac{1}{N}\sum_{k=1}^N \vp^k\] belongs to it. (Any such cluster point may even be seen to be idempotent, for instance, by \cite[Proposition 5.2]{arveson}.) Notice that if $\theta(x) = x$, then $\vp(x) = \vp\theta(x) = \theta(x) = x$, so $F_\theta\subseteq F_\vp$. Thus for any idempotent $e\in\cc I(\cc T_{E,\vp})$, we have that the range of $e$ is contained in $F_\vp$.  If $\vp\in \cc{CC}(\cc B(H))$, then the foregoing applies to $\vp^{**}\in \cc{CC}(\cc B(H)^{**})$ and $E = F_\vp\subseteq (F_{\vp})^{**}\subseteq F_{\vp^{**}}$. Note, however, that $\cc B(H)^{**}$ is not injective, as $\cc B(H)$ isn't nuclear \cite{choi}. 

\begin{cor}
    Let $\cc M$ be an injective von Neumann algebra, $\vp: \cc M\to\cc M$ a completely contractive normal map, and $E\subseteq F_\vp$ a norm-closed subspace. If $e\in T_{E,\vp}$ is a minimal idempotent with range $F$, then $E\subseteq F$ is a rigid inclusion of operator spaces.
\end{cor}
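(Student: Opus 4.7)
The plan is to mimic the proof of the previous rigidity theorem, with the minimal idempotent $e$ taking the role of $\vp$ there and $\cc T_{E,\vp}$ taking the role of the ambient semigroup $\cc S$. The key additional wrinkle, which I expect to be the main obstacle, is that an arbitrary completely contractive extension of $\theta$ to $\cc M$ need not satisfy the left-multiplication constraint $\vp\theta' = \theta'$ required for membership in $\cc T_{E,\vp}$. I plan to circumvent this by sandwiching the extension between two copies of $e$.

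First, using the assumed injectivity of $\cc M$ together with Wittstock's extension theorem, I would extend $\theta : F\to F$ to a completely contractive self-map $\theta' : \cc M\to \cc M$. Setting $\sigma := e\,\theta'\,e$, the plan is to verify that $\sigma \in \cc T_{E,\vp}$: the relation $\vp e = e$ (which holds because $e\in\cc T_{E,\vp}$) gives $\vp\sigma = \sigma$, and both $e$ and $\theta'$ restrict to the identity on $E\subseteq F$, so that $\sigma$ does as well.

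Next, I would invoke that $e$ is a minimal idempotent of the compact affine right topological semigroup $\cc T_{E,\vp}$. By the remark following Lemma~\ref{kennedy}, $\cc T_{E,\vp}\cdot e$ is a minimal closed left ideal, and by Lemma~\ref{kennedy} itself it is a left zero semigroup. Since $\sigma\in \cc T_{E,\vp}\cdot e$, this forces $e\sigma = e$, that is, $e\,\theta'\,e = e$.

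Finally, I would read off rigidity by evaluating this identity on an arbitrary $y\in F$. Because $y$ lies in the range of the idempotent $e$ we have $e(y) = y$, and because $\theta$ maps $F$ into itself, $\theta(y)\in F$ is also fixed by $e$. Expanding $(e\,\theta'\,e)(y) = e(y)$ using $\theta'|_F = \theta$ then collapses to $\theta(y) = y$, completing the argument.
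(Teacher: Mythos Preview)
Your argument is correct and follows the same overall strategy as the paper: extend $\theta$ to $\cc M$ by injectivity, manufacture from this extension an element of $\cc T_{E,\vp}$ lying in the minimal closed left ideal $\cc T_{E,\vp}\, e$, and apply Lemma~\ref{kennedy} to conclude $e\theta' e = e$, whence $\theta = \id_F$.

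The one genuine difference is in how you force membership in $\cc T_{E,\vp}$. The paper takes the extension $\theta'$ and replaces it by a pointwise-ultraweak cluster point $\theta''$ of the averages $\tau_N\theta'$, using normality of $\vp$ to get $\vp\theta'' = \theta''$; one must also observe (implicitly) that $\theta''$ still extends $\theta$, which holds because $\vp$ fixes $F = e(\cc M)$ pointwise. You instead set $\sigma = e\theta' e$ and use $\vp e = e$ directly to obtain $\vp\sigma = \sigma$. Your route is a bit more elementary in that it avoids the averaging/cluster-point step entirely and does not explicitly invoke normality of $\vp$ again; the paper's route has the mild advantage that it produces an element of $\cc T_{E,\vp}$ that genuinely extends $\theta$, though this is not needed for the conclusion. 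Either way, one lands in $\cc T_{E,\vp}\, e$ and finishes identically via Lemma~\ref{kennedy}.
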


\begin{proof}
     Let $\theta:F\to F$ be a completely contractive map which pointwise fixes $E$. The proof follows the same lines as the previous theorem, noting that if $\theta':\cc M\to \cc M$ is an extension of $\theta$, we may replace it with an extension $\theta''$ satisfying $\vp\theta'' = \theta''$ by taking $\theta''$ to be a pointwise-ultraweak cluster point of $\tau_N \theta'$.
\end{proof}

These arguments can be adapted to a wide variety of related categories: see \cite{cecco} for a detailed treatment of categorical considerations. We outline a few of these.

\begin{enumerate}
    \item If $X\subset \cc B(H)$ is a weakly closed injective subspace, then the set $\cc {CC}(X)$ of completely contractive maps $\phi: X\to X$ is a closed affine right topological subsemigroup of $\cc C(\cc B(H))$.

    \item  If there is a $G$-action by complete isometries on an operator space $E$ and $E\subset X\subset \cc B(H)$ is weakly closed, injective, and $G$-invariant, then the set of $G$-equivariant maps in $\cc{CC}(X)$ which pointwise fix $E$ is easily seen to be a closed subsemigroup of $\cc S$, and the proof Theorem \ref{thm:main} shows the existence of a (relative) $G$-injective envelope for $E$. See \cite{kk, kklru} for more on this and applications to the theory of reduced group C$^*$-algebras and C$^*$-algebras of groupoids.

    \item If $\pi: G\to \cc U(H)$ is a unitary representation, then there is a minimal unital completely positive $\pi$-invariant projection $E: \cc B(H)\to \cc B(H)$. In general there should be many such projections. If $\pi$ is the left regular representation of $G$ on $\ell^2(G)$, then the image of one such projection lies in $\ell^\infty(G)$, thus corresponds with the Furstenberg--Hamana boundary. If $\pi$ is amenable in the sense of Bekka \cite{be}, it is trivial to see that $\bb C 1_{\cc B(H)}$ is one such subspace.
\end{enumerate}

\section*{Acknowledgements}

 The author thanks Mehrdad Kalantar for thoughtful comments and for pointing out Paulsen's work \cite{pa1}. 
 This note is a reworked version of an informal note \cite{sin} which the author publicly posted to his research webpage in October 2015. 
 The author is thankful to Adam Dor-On and Matthew Kennedy for the encouragement to turn that note into a formal manuscript and to the anonymous reviewer for helpful comments. The author was partially supported by NSF grant DMS-2055155.

\begin{bibdiv}
    \begin{biblist}

\bib{at}{book}{
   author={Argyros, Spiros A.},
   author={Todorcevic, Stevo},
   title={Ramsey methods in analysis},
   series={Advanced Courses in Mathematics. CRM Barcelona},
   publisher={Birkh\"auser Verlag, Basel},
   date={2005},
   pages={viii+257},
   isbn={978-3-7643-7264-4},
   isbn={3-7643-7264-8},
   review={\MR{2145246}},
}

\bib{arveson}{article}{
   author={Arveson, William},
   title={The asymptotic lift of a completely positive map},
   journal={J. Funct. Anal.},
   volume={248},
   date={2007},
   number={1},
   pages={202--224},
   issn={0022-1236},
   review={\MR{2329688}},
   doi={10.1016/j.jfa.2006.11.014},
}

\bib{be}{article}{
   author={Bekka, Mohammed E. B.},
   title={Amenable unitary representations of locally compact groups},
   journal={Invent. Math.},
   volume={100},
   date={1990},
   number={2},
   pages={383--401},
   issn={0020-9910},
   review={\MR{1047140}},
   doi={10.1007/BF01231192},
}

\bib{berglund}{book}{
   author={Berglund, J. F.},
   author={Junghenn, H. D.},
   author={Milnes, P.},
   title={Compact right topological semigroups and generalizations of almost
   periodicity},
   series={Lecture Notes in Mathematics},
   volume={663},
   publisher={Springer, Berlin},
   date={1978},
   pages={x+243},
   isbn={3-540-08919-5},
   review={\MR{0513591}},
}

\bib{cecco}{article}{
   author={Cecco, Arianna},
   title={A categorical approach to injective envelopes},
   journal={Ann. Funct. Anal.},
   volume={15},
   date={2024},
   number={3},
   pages={Paper No. 49, 28},
   issn={2639-7390},
   review={\MR{4736304}},
   doi={10.1007/s43034-024-00350-z},
}

\bib{choi}{article}{
   author={Choi, Man Duen},
   author={Effros, Edward G.},
   title={Nuclear $C\sp*$-algebras and injectivity: the general case},
   journal={Indiana Univ. Math. J.},
   volume={26},
   date={1977},
   number={3},
   pages={443--446},
   issn={0022-2518},
   review={\MR{0430794}},
   doi={10.1512/iumj.1977.26.26034},
}

\bib{conway}{book}{
   author={Conway, John B.},
   title={A course in functional analysis},
   series={Graduate Texts in Mathematics},
   volume={96},
   edition={2},
   publisher={Springer-Verlag, New York},
   date={1990},
   pages={xvi+399},
   isbn={0-387-97245-5},
   review={\MR{1070713}},
}

\bib{dp}{article}{
   author={Das, Sayan},
   author={Peterson, Jesse},
   title={Poisson boundaries of ${\rm II}_1$ factors},
   journal={Compos. Math.},
   volume={158},
   date={2022},
   number={8},
   pages={1746--1776},
   issn={0010-437X},
   review={\MR{4493239}},
   doi={10.1112/S0010437X22007539},
}

\bib{dgl}{book}{
   author={Di Nasso, Mauro},
   author={Goldbring, Isaac},
   author={Lupini, Martino},
   title={Nonstandard methods in Ramsey theory and combinatorial number
   theory},
   series={Lecture Notes in Mathematics},
   volume={2239},
   publisher={Springer, Cham},
   date={2019},
   pages={xvi+204},
   isbn={978-3-030-17955-7},
   isbn={978-3-030-17956-4},
   review={\MR{3931702}},
   doi={10.1007/978-3-030-17956-4},
}

\bib{el}{article}{
   author={Ellis, Robert},
   title={Distal transformation groups},
   journal={Pacific J. Math.},
   volume={8},
   date={1958},
   pages={401--405},
   issn={0030-8730},
   review={\MR{0101283}},
}

\bib{fk}{article}{
   author={Furstenberg, H.},
   author={Katznelson, Y.},
   title={Idempotents in compact semigroups and Ramsey theory},
   journal={Israel J. Math.},
   volume={68},
   date={1989},
   number={3},
   pages={257--270},
   issn={0021-2172},
   review={\MR{1039473}},
   doi={10.1007/BF02764984},
}

\bib{gl}{book}{
   author={Glasner, Eli},
   title={Ergodic theory via joinings},
   series={Mathematical Surveys and Monographs},
   volume={101},
   publisher={American Mathematical Society, Providence, RI},
   date={2003},
   pages={xii+384},
   isbn={0-8218-3372-3},
   review={\MR{1958753}},
   doi={10.1090/surv/101},
}

\bib{hp}{article}{
   author={Hadwin, Don},
   author={Paulsen, Vern I.},
   title={Injectivity and projectivity in analysis and topology},
   journal={Sci. China Math.},
   volume={54},
   date={2011},
   number={11},
   pages={2347--2359},
   issn={1674-7283},
   review={\MR{2859698}},
   doi={10.1007/s11425-011-4285-7},
}

\bib{ha}{article}{
   author={Hamana, Masamichi},
   title={Injective envelopes of $C\sp{\ast} $-algebras},
   journal={J. Math. Soc. Japan},
   volume={31},
   date={1979},
   number={1},
   pages={181--197},
   issn={0025-5645},
   review={\MR{0519044}},
   doi={10.2969/jmsj/03110181},
}

\bib{ha1}{article}{
   author={Hamana, Masamichi},
   title={Injective envelopes of operator systems},
   journal={Publ. Res. Inst. Math. Sci.},
   volume={15},
   date={1979},
   number={3},
   pages={773--785},
   issn={0034-5318},
   review={\MR{0566081}},
   doi={10.2977/prims/1195187876},
}

\bib{ha2}{article}{
   author={Hamana, Masamichi},
   title={Injective envelopes of $C^\ast$-dynamical systems},
   journal={Tohoku Math. J. (2)},
   volume={37},
   date={1985},
   number={4},
   pages={463--487},
   issn={0040-8735},
   review={\MR{0814075}},
   doi={10.2748/tmj/1178228589},
}

\bib{ha3}{article}{
   author={Hamana, Masamichi},
   title={Injective envelopes of dynamical systems},
   journal={Toyama Math. J.},
   volume={34},
   date={2011},
   pages={23--86},
   issn={1880-6015},
   review={\MR{2985658}},
}

\bib{hs}{book}{
   author={Hindman, Neil},
   author={Strauss, Dona},
   title={Algebra in the Stone-\v Cech compactification},
   series={De Gruyter Textbook},
   edition={extended edition},
   note={Theory and applications},
   publisher={Walter de Gruyter \& Co., Berlin},
   date={2012},
   pages={xviii+591},
   isbn={978-3-11-025623-9},
   review={\MR{2893605}},
}

\bib{izumi}{article}{
   author={Izumi, Masaki},
   title={$E_0$-semigroups: around and beyond Arveson's work},
   journal={J. Operator Theory},
   volume={68},
   date={2012},
   number={2},
   pages={335--363},
   issn={0379-4024},
   review={\MR{2995726}},
}

\bib{kk}{article}{
   author={Kalantar, Mehrdad},
   author={Kennedy, Matthew},
   title={Boundaries of reduced $C^*$-algebras of discrete groups},
   journal={J. Reine Angew. Math.},
   volume={727},
   date={2017},
   pages={247--267},
   issn={0075-4102},
   review={\MR{3652252}},
   doi={10.1515/crelle-2014-0111},
}

\bib{kklru}{article}{
    author={Kennedy, Matthew},
    author={Kim, Se-Jin},
    author={Li, Xin},
    author={Raum, Sven},
    author={Ursu, Dan},
    title={The ideal intersection property for essential groupoid C*-algebras},
    journal={arXiv e-prints},
    year={2021},
    doi={10.48550/arXiv.2107.03980}  
}

\bib{marrakchi}{article}{
   author={Marrakchi, Amine},
   title={On the weak relative Dixmier property},
   journal={Proc. Lond. Math. Soc. (3)},
   volume={122},
   date={2021},
   number={1},
   pages={118--123},
   issn={0024-6115},
   review={\MR{4210259}},
   doi={10.1112/plms.12347},
}

\bib{pa}{book}{
   author={Paulsen, Vern},
   title={Completely bounded maps and operator algebras},
   series={Cambridge Studies in Advanced Mathematics},
   volume={78},
   publisher={Cambridge University Press, Cambridge},
   date={2002},
   pages={xii+300},
   isbn={0-521-81669-6},
   review={\MR{1976867}},
}

\bib{pa3}{article}{
   author={Paulsen, Vern I.},
   title={A dynamical systems approach to the Kadison-Singer problem},
   journal={J. Funct. Anal.},
   volume={255},
   date={2008},
   number={1},
   pages={120--132},
   issn={0022-1236},
   review={\MR{2417811}},
   doi={10.1016/j.jfa.2008.04.006},
}

\bib{pa1}{article}{
   author={Paulsen, Vern I.},
   title={Weak expectations and the injective envelope},
   journal={Trans. Amer. Math. Soc.},
   volume={363},
   date={2011},
   number={9},
   pages={4735--4755},
   issn={0002-9947},
   review={\MR{2806689}},
   doi={10.1090/S0002-9947-2011-05203-7},
}

\bib{ru}{article}{
   author={Ruan, Zhong-Jin},
   title={Injectivity of operator spaces},
   journal={Trans. Amer. Math. Soc.},
   volume={315},
   date={1989},
   number={1},
   pages={89--104},
   issn={0002-9947},
   review={\MR{0929239}},
   doi={10.2307/2001374},
}

\bib{ryan}{book}{
   author={Ryan, Raymond A.},
   title={Introduction to tensor products of Banach spaces},
   series={Springer Monographs in Mathematics},
   publisher={Springer-Verlag London, Ltd., London},
   date={2002},
   pages={xiv+225},
   isbn={1-85233-437-1},
   review={\MR{1888309}},
   doi={10.1007/978-1-4471-3903-4},
}

\bib{sin}{article}{
   author={Sinclair, Thomas},
   title={A very short proof of the existence of an injective envelope for an operator space},
   pages={\url{https://www.math.purdue.edu/~tsincla/injective-note-1.pdf}}
}

    \end{biblist}
\end{bibdiv}

\end{document}